\documentclass[11pt]{amsart}
\usepackage[english]{babel}
\usepackage[iso-8859-7]{inputenc}
\usepackage{latexsym}
\usepackage{amsmath, amsthm, amssymb}
\usepackage{amssymb}
\usepackage{amsmath}
\usepackage{amsfonts}
\usepackage{amsthm}
\usepackage[all]{xy}
\usepackage{indentfirst}
\input{epsf.tex}

\newtheorem{theorem}{Theorem}[section]
\newtheorem{lemma}[theorem]{Lemma}
\newtheorem{proposition}[theorem]{Proposition}

\theoremstyle{definition}
\newtheorem{definition}[theorem]{Definition}

\theoremstyle{remark}

\numberwithin{equation}{section}

\newcommand{\cK}{\mathcal{K}}

\newcommand{\la}{\lambda}

\newcommand{\asc}{{\rm asc}\,}
\newcommand{\codim}{{\rm codim}\,}
\def\e{\varepsilon}
\def\si{\sigma}

\newcommand{\NN}{\mathbb{N}}
\newcommand{\CC}{\mathbb{C}}

\def\toind#1{\smash{\mathop{\longrightarrow}\limits^{#1}}}

\begin{document}

\title{Invariant half-spaces for rank-one perturbations}

\author{V. M\"uller}

\address{Institute of Mathematics,
Czech Academy of Sciences,
ul. \v Zitna 25, Prague,
 Czech Republic}
\email{muller@math.cas.cz}

\date{}

\subjclass{Primary 47A15; Secondary 47A55}
\keywords{Invariant half-space, rank-one perturbation}
\thanks{The research has been supported by grant No. 20-31529X of GA CR and RVO:67985840}

\begin{abstract}
If $T$ is a bounded linear operator acting on an infinite-dimensional Banach space and $\e>0$, then there exists and operator $F$ of rank at most one with $\|F\|<\e$ such that $T-F$ has an invariant subspace of infinite dimension and codimension. This improves results of Tcaciuc and other authors.
\end{abstract}
\maketitle

\section{Introduction}
The invariant subspace problem is the most important problem in operator theory. It is the question whether each bounded linear operator on a complex Banach space has a nontrivial closed invariant subspace. The problem is still open for operators on Hilbert spaces, or more generally, on reflexive Banach spaces. In the class of non-reflexive Banach spaces negative examples were given by Enflo \cite{E} and Read \cite{R}.

It is easy to see that each operator on a non-separable Banach space has a nontrivial invariant subspace. Similarly, all operators on a finite-dimensional Banach space of dimension at least two have eigenvalues, and so nontrivial invariant subspaces. So the question makes sense only in separable infinite-dimensional Banach spaces.

Inspired by the invariant subspace problem, the following question was studied intensely: given a Banach space operator $T$, does there exists a "small" perturbation $F$ such that $T-F$ has an invariant subspace?

It is easy to see that for each bounded linear operator $T$ on a Banach space $X$ there exists a rank-one operator $F$ such that $T-F$ has a one-dimensional invariant subspace. Indeed, take any non-zero vector $x\in X$ and a rank-one operator $F$ on $X$ such that $Fx=Tx$. Then $(T-F)x=0$ and so $T-F$ has the one-dimensional invariant subspace generated by $x$.

So the proper question is: does every operator $T$ have a "small" perturbation $F$ such that $T-F$ has an invariant subspace of infinite dimension and codimension?

For short, closed subspaces of infinite dimension and codimension are called half-spaces.

The first result in this direction was proved by Brown and Pearcy \cite{BP}:

\begin{theorem}
Let $T$ be an operator on a separable infinite-dimensional Hilbert space $H$ and let $\e>0$. Then there exists a compact operator $K$ on $H$ such that $\|K\|<\e$ and $T-K$ has an invariant half-space.
\end{theorem}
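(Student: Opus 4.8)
The plan is to reduce the theorem to a statement about orthonormal bases. Observe first that if $M\subseteq H$ is a half-space for which the operator $R:=P_{M^\perp}T|_M\colon M\to M^\perp$ is compact with $\|R\|<\e$, then putting $K:=P_{M^\perp}TP_M$ (so that $K$ acts as $R$ on $M$ and as $0$ on $M^\perp$) we obtain a compact operator with $\|K\|=\|R\|<\e$ and $(T-K)x=Tx-P_{M^\perp}Tx=P_MTx\in M$ for every $x\in M$; hence $M$ is invariant for $T-K$. So it suffices to produce a half-space $M$ with $P_{M^\perp}T|_M$ compact of norm $<\e$. I would look for such an $M$ in the form $M=\overline{\operatorname{span}}\{e_n:n\in A\}$, where $(e_n)_{n\ge1}$ is an orthonormal basis of $H$ to be constructed and $A\subseteq\NN$ is an infinite set with infinite complement $B$; then $M^\perp=\overline{\operatorname{span}}\{e_n:n\in B\}$, and the matrix of $P_{M^\perp}T|_M$ in these coordinates is $\bigl(\langle Te_j,e_i\rangle\bigr)_{i\in B,\,j\in A}$.

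The basis is built one vector at a time. Fix positive numbers $\delta_n$ with $\sum_n n\delta_n<\e$. At step $n$ one chooses $e_n$ orthonormal to $e_1,\dots,e_{n-1}$ and decides whether $n\in A$ or $n\in B$; if $n$ is put into $A$ one also requires $|\langle e_n,T^*e_i\rangle|<\delta_n$ for every $i\in B$ with $i<n$, and if $n$ is put into $B$ one requires $|\langle Te_j,e_n\rangle|<\delta_n$ for every $j\in A$ with $j<n$. Each time this is only finitely many near-orthogonality conditions, which can always be met --- in fact even with $\delta_n$ replaced by $0$ --- inside the infinite-dimensional space $\{e_1,\dots,e_{n-1}\}^\perp$. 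Carrying this out throughout guarantees $|\langle Te_j,e_i\rangle|\le\delta_{\max(i,j)}$ whenever $j\in A$ and $i\in B$, so $\sum_{i\in B,\,j\in A}|\langle Te_j,e_i\rangle|\le\sum_n n\delta_n<\e$. A matrix with absolutely summable entries defines a Hilbert--Schmidt, hence compact, operator whose norm is at most the sum of the moduli of its entries; thus $P_{M^\perp}T|_M$ is compact with norm $<\e$, and since $A$ and $B$ are infinite, $M$ is a half-space. (One even gets $K$ Hilbert--Schmidt.)

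The point that needs real care --- and which I expect to be the main difficulty --- is to run the induction so that $(e_n)$ is a \emph{complete} orthonormal system, for only then is $M^\perp$ exactly $\overline{\operatorname{span}}\{e_n:n\in B\}$ and the computation above legitimate. To this end I would fix a dense sequence $(w_m)$ in $H$ and interleave, at a thin sequence of steps, a bookkeeping move that pushes $\operatorname{dist}\bigl(w_m,\operatorname{span}\{e_1,\dots,e_n\}\bigr)$ to $0$ for each $m$. The obstruction is that the vector one would naturally append at such a step --- a normalization of the component of $w_m$ orthogonal to what has already been chosen --- is essentially forced and need not satisfy the near-orthogonality requirements; note that if one could keep those requirements with $\delta_n=0$ \emph{and} also achieve completeness, then $M$ would be honestly $T$-invariant, which would settle the invariant half-space problem on Hilbert space. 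The resolution should be to append at each bookkeeping step only a unit vector \emph{close} to the desired direction --- close enough that, summed over the infinitely many bookkeeping steps assigned to a fixed $w_m$, the distance still tends to $0$ --- while keeping it near-orthogonal to the finitely many current constraints and charging the resulting defects to the budget $\sum_n n\delta_n<\e$; when the required direction nearly lies in the span of the current constraints one defers $w_m$ and returns to it at a later stage. Arranging the order of the steps, the assignment of each appended vector to $A$ or $B$, and the tolerances $\delta_n$ so that orthonormality, completeness, and summability of the cross terms all hold simultaneously is where the real work lies.
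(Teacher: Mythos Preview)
This theorem is quoted in the paper (with attribution to Brown and Pearcy) as the historical starting point; the paper does not prove it directly. The paper's own main theorem is strictly stronger and is established by entirely different methods (boundary of the essential spectrum, basic sequences, ascent arguments in a general Banach space), so there is no direct ``paper's proof'' to compare against.

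Your proposal is a strategy, not a proof, and the gap you yourself flag is genuine. The reduction in your first paragraph is correct, and the inductive construction of an orthonormal \emph{sequence} $(e_n)$ with $\sum_{i\in B,\,j\in A}|\langle Te_j,e_i\rangle|<\e$ goes through exactly as you describe. But without completeness $M^\perp$ properly contains $\overline{\operatorname{span}}\{e_i:i\in B\}$, and $P_{M^\perp}T|_M$ acquires an uncontrolled piece. Your proposed fix --- interleave ``bookkeeping'' steps that push a dense sequence $(w_m)$ into the span, using a unit vector merely close to the forced residual direction while keeping the near-orthogonality constraints --- does not obviously succeed: the residual direction can violate the constraints (for \emph{both} choices of putting the new index into $A$ or into $B$) by an amount bounded away from zero, so perturbing it into the constraint set may make only an arbitrarily small dent in the residual, and nothing then forces the residuals to tend to zero. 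You are right that this is ``where the real work lies'', but the work is not done here, and it is not routine.

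A much shorter route avoids completeness altogether. Pick $\lambda\in\si_e(T)$; after possibly replacing $T$ by $T^*$, there is an orthonormal sequence $(e_n)$ with $\|(T-\lambda)e_n\|\to 0$. Pass to a subsequence with $\sum_n\|(T-\lambda)e_n\|<\e$, set $M=\overline{\operatorname{span}}\{e_{2n}\}$, and take $K=(T-\lambda)P_M$. Then $K$ is nuclear with $\|K\|<\e$, and $(T-K)x=\lambda x\in M$ for every $x\in M$, so the half-space $M$ is invariant for $T-K$. (If it was $T^*$ that had the singular sequence, take $K=P_M(T-\lambda)$ and use $M^\perp$ instead.) No basis, no completeness issue.
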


The question has been then studied by a number of authors, see e.g. \cite{APTT}, \cite{MPR}, \cite{PT}, \cite{T}.
The research culminated by \cite{T}, where the following results were proved.

\begin{theorem}
Let $T$ be an operator on an infinite-dimensional Banach space $X$. Then:
\begin{itemize}
\item[(i)] {\rm (\cite{T}, Theorem 1.1)} there exists an operator $F$ of rank at most one such that $T-F$ has an invariant half-space;
\item[(ii)] {\rm(\cite{T}, Theorem 4.3)} if $\e>0$, then there exists an operator $F$ of finite rank with $\|F\|<\e$ such that $T-F$ has an invariant half-space.
\end{itemize}
\end{theorem}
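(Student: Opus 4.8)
The plan is to reformulate both statements in terms of one geometric object: a half-space together with a small \emph{overflow}. If $M$ is a half-space and $Q\colon X\to X/M$ the quotient map, then finding a rank-$\le 1$ (resp.\ finite-rank) operator $F$ with $(T-F)M\subseteq M$ is equivalent to finding $M$ for which the overflow map $QT|_M\colon M\to X/M$ has rank $\le 1$ (resp.\ finite rank). Indeed, if $QT|_M$ has finite rank $r$, write $QTx=\sum_{i=1}^r g_i(x)\,\bar y_i$ with $g_i\in M^\ast$, lift the $\bar y_i$ to vectors $y_i\in X$, extend the $g_i$ to $\tilde g_i\in X^\ast$ by Hahn--Banach, and set $F=\sum_i \tilde g_i\otimes y_i$; then $Q(T-F)|_M=0$, so $(T-F)M\subseteq M$, with $\|F\|$ comparable to $\|QT|_M\|$ and rank at most $r$. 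Thus (i) asks for a half-space with one-dimensional overflow, while (ii) asks for one with finite-dimensional overflow of arbitrarily small norm.

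Before constructing $M$ I would dispose of the degenerate cases. If $T$ already has an invariant half-space we take $F=0$. If $T$ has an eigenvalue $\la$ of infinite multiplicity, then on $\Ker(T-\la)$ the operator acts as $\la I$, so every subspace is invariant; choosing an infinite-dimensional, infinite-codimensional subspace of $\Ker(T-\la)$ gives an invariant half-space and again $F=0$ works. Hence I may assume all eigenvalues have finite multiplicity and no invariant half-space exists.

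For part (i) the key observation is that a \emph{single} overflow direction can be obtained by letting the spectral parameter vary. Fix $y\ne0$ and choose distinct points $\alpha_1,\alpha_2,\dots$ in the nonempty resolvent set $\rho(T)$, and put $x_n=(T-\alpha_n)^{-1}y$, so that $Tx_n=\alpha_n x_n+y$. Setting $M:=\overline{\operatorname{span}}\{x_n\}$ we get $QTx_n=Qy$ for every $n$, so the overflow map has range in the single line $\bC\cdot Qy$ and is of rank $\le 1$. The associated corrector is $F=f\otimes y$ with $f=\sum_n x_n^\ast$ (the sum of coordinate functionals, so $f(x_n)=1$); then $(T-F)x_n=\alpha_n x_n$, so each $x_n$ is an eigenvector of $T-F$ and, by continuity, $(T-F)M\subseteq M$. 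It remains to make $(x_n)$ a basic sequence whose closed span is a half-space and to make $f$ bounded. Linear independence holds for distinct $\alpha_n$ once $y$ is not supported on finitely many spectral directions; the Bessaga--Pe\l czy\'nski selection principle makes a subsequence basic, and a gliding-hump reservation of infinitely many independent directions keeps $\codim M=\infty$ (and $y\notin M$). Finally, for a basic sequence with basis constant $K$ one has $\|x_n^\ast\|\le 2K/\|x_n\|$, so $\|f\|\le 2K\sum_n\|x_n\|^{-1}<\infty$ provided $\|x_n\|\to\infty$ fast enough; crucially, no bound on the size of $\|f\|$ is needed, which is exactly why (i) carries no norm bound.

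For part (ii) I would run the same resolvent construction but now exploit the freedom in the $\alpha_n$ to force $\|x_n\|=\|(T-\alpha_n)^{-1}y\|$ to grow so rapidly that $\sum_n\|x_n\|^{-1}<\e/(2K\|y\|)$, giving $\|F\|=\|f\|\,\|y\|<\e$. The essential difficulty, and the step I expect to be the main obstacle, is precisely this norm control: for a \emph{fixed} $y$ the quantity $\|(T-\alpha)^{-1}y\|$ need not blow up as $\alpha$ approaches $\partial\si(T)$, even though the resolvent norm does. To force large solution norms I would approach a boundary point $\alpha^\ast\in\partial\si(T)\subseteq\si_{ap}(T)$ along suitable $\alpha_n$ and use approximate eigenvectors at $\alpha^\ast$; if no single $y$ serves all the required parameters, I would spread the construction over finitely many directions $y_1,\dots,y_d$, taking $x_n=(T-\alpha_n)^{-1}y_{j(n)}$. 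This yields a corrector of \emph{rank} $\le d$ and norm $<\e$ rather than a rank-one one, which matches the statement of (ii) and explains why the small-norm result is phrased with finite rather than rank-one perturbations. The remaining bookkeeping, namely maintaining basicness, independence and infinite codimension simultaneously with the norm estimates, is delicate but routine.
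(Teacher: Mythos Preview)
First, note that this theorem is quoted from \cite{T} rather than proved in the paper; the paper's own contribution is the stronger Section~3 result giving rank $\le 1$ \emph{and} $\|F\|<\e$ simultaneously. Your resolvent-vector scheme $x_n=(T-\alpha_n)^{-1}y$, $Tx_n=\alpha_nx_n+y$, is indeed the engine behind both \cite{T} and Propositions~\ref{P1}--\ref{P2} here, so the geometric set-up is right. The genuine gap is the basic-subsequence step. Citing ``Bessaga--Pe\l czy\'nski'' does not suffice: by Theorem~\ref{crit1} you need $0$ in the weak closure of the normalized vectors $\tilde x_n=x_n/\|x_n\|$. With $\alpha_n\to 0\in\partial\si(T)$ and $\|x_n\|\to\infty$ one gets $T\tilde x_n\to 0$, which only forces weak cluster points into $N(T)$, not to $0$; when $N(T)\ne\{0\}$ the criterion may fail and no basic subsequence need exist. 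The paper resolves this by a case split on the ascent: if $\asc T<\infty$ one projects off the finite-dimensional space $N(T^k)$ before normalizing (Proposition~\ref{P1}); dually for $\asc T^*<\infty$ (Proposition~\ref{P2}); and if both ascents are infinite one constructs an honestly $T$-invariant half-space with $F=0$ via Lemma~\ref{P3}. Your sketch flattens these three cases and would stall whenever $\asc T=\infty$ but $N(T)\ne\{0\}$.

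A second point: even in (i) you need $\sum_n\|x_n\|^{-1}<\infty$ merely to make $f$ a \emph{bounded} functional, so $\|(T-\alpha_n)^{-1}y\|\to\infty$ is required from the outset; this is arranged by taking $\alpha_n\to\la\in\si_e(T)\cap\partial\si(T)$ and choosing $y$ via the Banach--Steinhaus theorem, exactly as in the paper. Once that single $y$ is fixed, the \emph{same} $y$ serves for (ii): passing to a sparser subsequence makes $\|F\|<\e$ with rank one. Your proposed multi-direction corrector is therefore an unnecessary weakening, and your diagnosis of the ``essential difficulty'' in (ii) is off --- uniform boundedness guarantees that some fixed $y$ witnesses the blow-up along a subsequence.
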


In this note we complete and unify the above results and show that for any operator $T$ acting on an infinite-dimensional  Banach space $X$ and $\e>0$ there exists an operator $F$ of rank at most one with $\|F\|<\e$ such that $T-F$ has an invariant half-space. The proof uses modified techniques from \cite{PT} and \cite{T}.

\section{Preliminaries}
For a (complex) Banach space $X$, we denote by $X^*$ its dual. 

If $M\subset X$ is a subset, then the annihilator $M^\perp$ is defined by
$$
M^\perp=\{x^*\in X^*: \langle m,x^*\rangle=0\hbox{ for all }m\in M\}.
$$
Clearly $M^\perp$ is a $w^*$-closed subspace of $X^*$.

Similarly, for a subset $M'\subset X^*$ define the preannihilator ${}^\perp M'$ by
$$
{}^\perp M'=\{x\in X: \langle x,m^*\rangle=0 \hbox{ for all } m^*\in M'\}.
$$
Clearly ${}^\perp M'$ is a weakly closed (and so closed) subspace of $X$.

A sequence $(x_n)_{n=1}^\infty$ in $X$ is called basic if any vector $x\in\bigvee_{n=1}^\infty x_n$ can be written uniquely as $x=\sum_{n=1}^\infty \alpha_nx_n$ for some complex coefficients $\alpha_n$. Then there exist functionals $x_n^*\in X^*\quad(n\in\NN)$ such that $\langle x_n,x^*_j\rangle=\delta_{n,j}$ (the Kronecker symbol) for all $n,j\in\NN$ and
$\sup\{\|x^*_j\|:j\in\NN\}<\infty$.

Recall that any infinite-dimensional Banach space contains a basic sequence. If $(x_n)_{n=1}^\infty$ is a basic sequence in a Banach space $X$ and $A\subset\NN$ an infinite subset such that $\NN\setminus A$ is infinite then it is easy to see that $\bigvee_{n\in A}x_n$ is a subspace of infinite dimension and codimension. In particular, in any infinite-dimensional Banach space there is a half-space.

The basic result about the existence of basic sequences is the following
 criterion, see \cite{KP} or \cite{AK}, Theorem 1.5.6. Recall that a sequence of vectors in a Banach space is called semi-normalized if it is bounded and bounded away from zero.

\begin{theorem}\label{crit1}{\rm (Kadets-Pe\l czy\'nski)} 
Let $(x_n)_{n=1}^\infty$ be a semi-normalized sequence in a Banach space $X$. Then the following conditions are equivalent:
\begin{itemize}
\item[(i)] $(x_n)_{n=1}^\infty$ fails to contain a basic subsequence;
\item[(ii)] the weak closure $\{x_n:n\in\NN\}^{-w}$ is weakly compact and fails to contain $0$.
\end{itemize}
\end{theorem}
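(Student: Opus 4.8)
The two implications are of rather different character, so I would treat them separately. Throughout put $A=\{x_n:n\in\NN\}$ and $K=A^{-w}$, and recall the standing bounds $0<c\le\|x_n\|\le C<\infty$. The outside inputs I would draw on are the Eberlein--\v Smulian theorem, in the strong form that a bounded set is relatively weakly compact if and only if every sequence in it has a weakly convergent subsequence, and that in that case every point of its weak closure is a weak sequential limit of points of the set; and the Bessaga--Pe\l czy\'nski selection principle, that every semi-normalized weakly null sequence contains a basic subsequence.

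For the implication (ii)$\Rightarrow$(i) the key remark is that a basic sequence cannot converge weakly to a non-zero vector: if $(e_k)$ is basic, $e_k^*$ are its coordinate functionals extended to $X$ by Hahn--Banach, and $e_k\to e$ weakly, then $e$ lies in the closed linear span of $(e_k)$ (weakly closed by Mazur) and $\langle e,e_j^*\rangle=\lim_k\langle e_k,e_j^*\rangle=\lim_k\delta_{kj}=0$ for every $j$, forcing $e=0$. Hence, if $K$ is weakly compact with $0\notin K$, any subsequence of $(x_n)$ has, by Eberlein--\v Smulian, a further subsequence converging weakly to a point of $K$, that is, to a non-zero vector; since a subsequence of a basic sequence is basic, this shows that no subsequence of $(x_n)$ can be basic, which is (i).

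For (i)$\Rightarrow$(ii) I would argue by contraposition: assuming $K$ is not weakly compact or $0\in K$, I produce a basic subsequence. If $K$ is weakly compact, the hypothesis forces $0\in K$, and since $A$ is then relatively weakly compact, Eberlein--\v Smulian provides a subsequence $x_{n_k}\to 0$ weakly, to which Bessaga--Pe\l czy\'nski applies. If $K$ is not weakly compact, then $A$ is not relatively weakly compact, and I would invoke the Grothendieck double-limit characterization of weak non-compactness to obtain, after passing to subsequences, a subsequence $(y_k)$, functionals $(g_l)$ in the dual unit ball, and scalars $a<b$ with $\langle y_k,g_l\rangle$ close to $a$ when $k<l$ and close to $b$ when $k>l$. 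The functionals $g_l-g_{l+1}$, paired with a suitable subsequence of $(y_k)$, then form an almost biorthogonal system whose errors can be made summable by thinning further, and a standard stability (perturbation) lemma for basic sequences upgrades this to a genuine basic subsequence of $(x_n)$.

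I expect this last step to be the main obstacle. Unlike the weakly null case there is no direct gliding-hump construction, so one has to read the approximate biorthogonal pair off the oscillation of $\langle y_k,g_l\rangle$ and control the errors before the perturbation lemma applies; in particular the differences $g_l-g_{l+1}$ remain non-negligible on the two diagonal bands $k=l$ and $k=l+1$, so one first passes to a sub-subsequence (for instance the odd-indexed, resp.\ even-indexed, terms) to isolate a single band. Alternatively the non-weakly-compact case can be routed through Rosenthal's $\ell^1$-theorem, giving either an $\ell^1$-subsequence (automatically basic) or a non-trivial weakly Cauchy sequence; but the latter still needs a basic subsequence extracted from it, so this only relocates the difficulty. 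Finally, I would at the very start replace $X$ by $\overline{\operatorname{span}}\,A$, harmlessly passing to a separable space, while noting that even there the weak topology need not be metrizable, which is precisely why Eberlein--\v Smulian, rather than an elementary diagonal argument, is what is needed.
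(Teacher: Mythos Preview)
The paper does not prove this theorem at all: it is quoted as a known result with references to Kadets--Pe\l czy\'nski and to Albiac--Kalton, Theorem~1.5.6, and is used only as a black box in the proof of Proposition~\ref{P1}. So there is no ``paper's own proof'' to compare against.

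As for your outline on its own merits: the implication (ii)$\Rightarrow$(i) is clean and correct, and so is the weakly-compact case of (i)$\Rightarrow$(ii) via Bessaga--Pe\l czy\'nski. The non-relatively-weakly-compact case, however, is only a plan, and you yourself flag its weak spot. Extracting a genuine basic subsequence out of the Grothendieck double-limit data requires more than ``thinning plus perturbation'': after passing to odd/even indices you control the off-diagonal entries $\langle z_j,h_l\rangle$ for $j\ne l$, but you still have no a~priori lower bound on the diagonal entries $\langle z_j,h_j\rangle$, so the ``almost biorthogonal'' system may degenerate and the small-perturbation lemma does not apply directly. The Rosenthal route you mention has the same defect: a weakly Cauchy, non-weakly-convergent sequence need not have an obvious basic subsequence without further work. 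The textbook argument (as in Albiac--Kalton) avoids this by working in $X^{**}$: if $A$ is not relatively weakly compact, its weak${}^*$-closure in $X^{**}$ contains a point $x^{**}\notin X$, and then a Mazur-type construction using finite-dimensional blocking against $x^{**}$ produces a basic subsequence of $(x_n)$. That step is what is really missing from your sketch; the rest is fine.
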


A dual version of this criterion can be found in \cite{GM} or \cite{JR}, Theorem III.1 and Remark III.1.

\begin{theorem}\label{crit2}
If $(x^*_n)_{n=1}^\infty$ is a semi-normalized sequence in a dual Banach space $X^*$ and $0$ is a weak${}^*$-cluster point of
$\{x^*_n\}_{n=1}^\infty$ then there exist a basic subsequence $\{y^*_k\}_{k=1}^\infty$ of $(x^*_n)_{n=1}^\infty$ and a bounded sequence $(y_k)_{k=1}^\infty$ in $X$ such that $\langle y_k,y^*_j\rangle=\delta_{k,j}$ for all $k,n\in\NN$.
\end{theorem}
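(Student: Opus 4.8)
The plan is to extract the required subsequence by a gliding-hump (Mazur-type) selection that uses the weak${}^*$-cluster hypothesis at each finite stage, and then to upgrade the resulting \emph{approximate} biorthogonality to \emph{exact} biorthogonality by a Neumann-series correction. Write $c=\inf_n\|x^*_n\|>0$ and $C=\sup_n\|x^*_n\|<\infty$, which are finite and positive since the sequence is semi-normalized. I will use the cluster-point hypothesis only through the following reformulation: for every finite set $v_1,\dots,v_p\in X$ and every $\delta>0$ there are infinitely many $n$ with $|\langle v_i,x^*_n\rangle|<\delta$ for all $i$. Indeed $\{x^*:|\langle v_i,x^*\rangle|<\delta,\ i\le p\}$ is a basic weak${}^*$-neighbourhood of $0$, so I may at each stage force the next chosen functional to be uniformly small on any prescribed finite subset of $X$.

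Fix $\e_k>0$ with $\prod_k(1+\e_k)\le 2$ and with $\sum_k k\,\e_k$ as small as desired (say $\e_k\le 2^{-k}/k$). I construct inductively indices $n_1<n_2<\cdots$, functionals $y^*_k=x^*_{n_k}$, and vectors $y_k\in X$ so that, writing $G^*_k=\mathrm{span}(y^*_1,\dots,y^*_k)$: (a) the Mazur inequality $\|\sum_{i\le m}a_iy^*_i\|\le 2\|\sum_{i\le l}a_iy^*_i\|$ holds for $m\le l$ and all scalars; (b) $\langle y_k,y^*_l\rangle=\delta_{k,l}$ for $l\le k$, with $\|y_k\|\le 4/c$; and (c) $|\langle y_k,y^*_l\rangle|\le\e_l$ for $l>k$. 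At step $k$, to keep (a) I first pick a finite subset $\{w_1,\dots,w_p\}$ of the unit ball $B_X$ that $(1-\eta)$-norms the finite-dimensional space $G^*_{k-1}$ (possible because $B_X$ norms $G^*_{k-1}$ isometrically and its unit sphere is compact); the standard gliding-hump computation shows that if the next unit-scaled functional is small enough on $w_1,\dots,w_p$ then appending it enlarges the basis constant by at most $1+\e_{k-1}$. To prepare (c) I additionally demand $\e_k$-smallness on the finite set $y_1,\dots,y_{k-1}$. All these constraints define one weak${}^*$-neighbourhood of $0$, so the reformulation lets me pick $n_k>n_{k-1}$ with $y^*_k=x^*_{n_k}$ meeting them.

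To produce $y_k$ satisfying (b), observe that $G^*_k$ is finite-dimensional, hence weak${}^*$-closed and reflexive; therefore $({}^\perp G^*_k)^\perp=G^*_k$ and $(X/{}^\perp G^*_k)^*=G^*_k$ isometrically, so the restriction map $X\to (G^*_k)^*$ is a metric quotient map. The coordinate functional $\phi_k\in(G^*_k)^*$ with $\phi_k(y^*_l)=\delta_{k,l}$ has norm at most $2\cdot(2/c)=4/c$ by (a), so it is realised by some $y_k\in X$ with $\|y_k\|\le 4/c$; this gives (b), and (c) is recorded automatically when the later functionals are chosen. Now form the matrix $M=(M_{k,l})$ with $M_{k,l}=\langle y_k,y^*_l\rangle$. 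By (b)–(c), $M=I+S$ with $S$ strictly upper triangular and $|S_{k,l}|\le\e_l$ for $l>k$; the choice of the $\e_l$ forces $\sup_k\sum_l|S_{k,l}|<\tfrac12$ and $\sup_l\sum_k|S_{k,l}|<\tfrac12$, so $M^{-1}=\sum_{m\ge0}(-S)^m$ converges with uniformly summable rows and columns. Setting
$$
\tilde y_i=\sum_k (M^{-1})_{i,k}\,y_k ,
$$
the series converges absolutely in $X$ (as $\sup_k\|y_k\|\le 4/c$) and $\sup_i\|\tilde y_i\|<\infty$, while $\langle\tilde y_i,y^*_l\rangle=\sum_k (M^{-1})_{i,k}M_{k,l}=(M^{-1}M)_{i,l}=\delta_{i,l}$. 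Since $(y^*_k)$ is basic by (a), the pair $(y^*_k)$, $(\tilde y_k)$ is as required.

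The crux, and the step I expect to be most delicate, is manufacturing the biorthogonal vectors \emph{inside the predual $X$} with a \emph{uniform} norm bound, since the cluster-point hypothesis only yields approximate annihilation. This is settled in two stages: the finite-dimensional quotient isometry $X/{}^\perp G^*_k\cong (G^*_k)^*$ furnishes exact biorthogonality against the functionals already chosen, with norm controlled by the bounded basis constant, and the Neumann correction then turns the residual upper-triangular errors into exact full biorthogonality without spoiling boundedness. Everything else is a routine gliding-hump argument, and the only reflexivity or weak${}^*$-closedness invoked is that of finite-dimensional subspaces, so neither local reflexivity nor separability of $X$ is needed.
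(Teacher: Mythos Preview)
The paper does not supply its own proof of this statement; it is quoted as Theorem~2.2 with references to \cite{GM} and \cite{JR} and then used as a black box in the proof of Proposition~\ref{P2}. So there is no in-paper argument to compare against, only the classical Johnson--Rosenthal proof in the cited literature.

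Your argument is correct and is essentially a clean reworking of that classical proof. The three ingredients are standard and correctly assembled: (i) Mazur's gliding-hump selection, carried out with norming vectors taken from $B_X$ rather than $B_{X^{**}}$ (legitimate since $B_X$ norms $X^*$ isometrically), which simultaneously gives the basic-sequence property and the approximate biorthogonality $|\langle y_k,y^*_l\rangle|\le\e_l$ for $l>k$; (ii) the exact finite biorthogonality at each stage via the isometric identification $(X/{}^\perp G^*_k)^*\cong G^*_k$, valid because $G^*_k$ is finite-dimensional and hence $w^*$-closed; and (iii) the Neumann correction $M^{-1}=\sum(-S)^m$ to upgrade approximate to exact biorthogonality. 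One cosmetic point: the quotient map $X\to (G^*_k)^*$ is a metric quotient, but the infimum defining the quotient norm need not be attained, so strictly you obtain $\|y_k\|\le 4/c+\eta$ for any $\eta>0$ rather than $\|y_k\|\le 4/c$; this of course does not affect boundedness. A second cosmetic point: your choice $\e_k\le 2^{-k}/k$ gives row sums of $S$ bounded by $\sum_{l\ge1}2^{-l}/l<1$ rather than $<\tfrac12$, so either tighten $\e_k$ or relax the threshold; again this is harmless.

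The Johnson--Rosenthal presentation organises the same ideas slightly differently (arguing that the basis projections are $w^*$--$w^*$ continuous on the linear span, which forces the coordinate functionals to come from $X$), but your explicit Neumann-series correction is an equally valid and arguably more transparent route to the same conclusion.
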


Denote by $B(X)$ the algebra of all (bounded linear) operators on a Banach space $X$.

Let $T\in B(X)$. Denote by $N(T)$ the kernel of $T$, $N(T)=\{x\in X: Tx=0\}$, and by $R(T)=TX$ the range of $T$.
Write $R^\infty(T)=\bigcap_{k=1}^\infty R(T^k)$. Clearly $R^\infty(T)$ is a (not necessarily closed) linear manifold.

Clearly $N(T)\subset N(T^2)\subset N(T^3)\subset\cdots$. Denote by $\asc T$ the ascent of $T$, $\asc T=\min\{k: N(T^{k+1})=N(T^k)\}$ (if no such $k$ exists then we set $\asc T=\infty$). It is easy to see that if $\asc T=k<\infty$ then $N(T^j)=N(T^k)$ for all $j\ge k$.

Denote by $\cK(X)$ the closed two-sided ideal of all compact operators on $X$. For $T\in B(X)$ let $\|T\|_e$ be the essential norm of $T$, $\|T\|_e=\inf\{\|T+K\|:K\in\cK(X)\}$.
Let $\si_e(T)$ be the essential spectrum of an operator $T\in B(X)$, $\si_e(T)=\{\la\in\CC: T-\la\hbox{ is not Fredholm}\}$. It is well known that $\si_e(T)$ is the spectrum of the class $T+\cK(X)$ in the Calkin algebra $B(X)/\cK(X)$.

\section{Main result}
For short, we use the following definition.

\begin{definition}
Let $T$ be an operator acting on an infinite-dimensional separable Banach space $X$. We say that $T$ has property (P) if for every $\e>0$ there exists an operator $F\in B(X)$ of rank at most one such that $\|F\|<\e$ and $T-F$ has an invariant half-space.
\end{definition}

\begin{proposition}\label{P1}
Let $X$ be a separable infinite-dimensional Banach space, let $T\in B(X)$, $0\in\si_e(T)\cap\partial\si(T)$ and $\asc T<\infty$. Then $T$ has property (P).
\end{proposition}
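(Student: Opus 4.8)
The plan is to reduce to the case where $T$ is injective with dense, non-closed range, and then to build a \emph{backward chain} along which a small rank-one perturbation makes a half-space invariant.

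\emph{Reductions.} Write $p=\asc T<\infty$. In several cases $T$ already has an invariant half-space, so $F=0$ works. If $\dim N(T^p)=\infty$: the subspace $N(T^p)$ is $T$-invariant, and if $\codim N(T^p)=\infty$ it is the required half-space; if $\codim N(T^p)<\infty$, then (from $\asc T=p$ together with the finite codimension) $X=N(T^p)\oplus R(T^p)$ with $R(T^p)$ finite-dimensional and $T|_{N(T^p)}$ nilpotent, so that $M\oplus R(T^p)$ is an invariant half-space for any infinite-dimensional $M\subseteq N(T)$ with $\codim_{N(T^p)}M=\infty$ — such $M$ exists because a nilpotent operator on an infinite-dimensional space has infinite-dimensional kernel. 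If $R(T)$ is closed: since $0\in\sigma_e(T)$, $T$ is not Fredholm, so either $\dim N(T)=\infty$ (handled above) or $\codim R(T)=\infty$, and then $R(T)$ is itself an invariant half-space. If $\codim\overline{R(T)}=\infty$: then $\overline{R(T)}$ is an invariant half-space. In the remaining case $R(T)$ is non-closed, $\overline{R(T)}$ has finite codimension, and $\dim N(T^p)<\infty$; replacing $X$ by the stabilized subspace $\bigcap_k\overline{R(T^k)}$ — a finite-codimensional $T$-invariant subspace on which $T$ has dense range — and then passing to the quotient by the finite-dimensional $N(T^p)$ (one checks that the hypotheses persist under these two reductions and that a suitable rank-one perturbation of the reduced operator lifts back to one of $T$), we are reduced to: $T$ injective, $\overline{R(T)}=X\neq R(T)$, $0\in\partial\sigma(T)$.

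\emph{The chain and the perturbation.} In this situation the goal is a basic sequence $(x_n)_{n\ge0}$ with
$$Tx_n=\alpha_n x_{n-1}\qquad(n\ge1),$$
where $\alpha_n\neq0$, $\sum_{n\ge1}|\alpha_n|$ is as small as we please, and $\bigvee_{n\ge0}x_n$ has infinite codimension. (Then automatically $x_n\in R^\infty(T)$ for all $n$, and $T$ restricts to a bijection of $R^\infty(T)$ onto itself.) Given such a chain, set $Y=\bigvee_{n\ge1}x_n$, a half-space; let $x_1^*$ be the first coordinate functional of the basic sequence $(x_n)_{n\ge1}$, extended to $X$ by Hahn–Banach with $\|x_1^*\|$ unchanged; and put $F=\alpha_1\,x_1^*(\cdot)\,x_0$. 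Then $F$ has rank $\le1$; since the basis constant (hence $\|x_1^*\|$) can be kept bounded by an absolute constant, $\|F\|\le|\alpha_1|\,\|x_1^*\|\,\|x_0\|$ can be made smaller than any prescribed $\e$. For $v=\sum_{n\ge1}a_nx_n\in Y$ one computes $Tv=\sum_{n\ge1}a_n\alpha_nx_{n-1}=a_1\alpha_1x_0+\sum_{m\ge1}a_{m+1}\alpha_{m+1}x_m$, the series converging because $\sup_n|a_n|\le C\|v\|$ and $\sum_n|\alpha_n|<\infty$; hence $(T-F)v=\sum_{m\ge1}a_{m+1}\alpha_{m+1}x_m\in Y$, so $Y$ is $(T-F)$-invariant.

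\emph{The main obstacle} is the construction of the backward chain, i.e. the choice of the starting unit vector $x_0\in R^\infty(T)$ whose backward orbit $x_n\propto (T|_{R^\infty(T)})^{-n}x_0$ is a basic sequence with closed span of infinite codimension and with the ratios $\alpha_n=\|(T|_{R^\infty(T)})^{-(n-1)}x_0\|/\|(T|_{R^\infty(T)})^{-n}x_0\|$ summable with small sum. This is where $0\in\partial\sigma(T)$ enters: it forces $R^\infty(T)$ to be infinite-dimensional and the inverse $S:=(T|_{R^\infty(T)})^{-1}$, though unbounded, to have unbounded resolvent set — picking $\lambda_j\to0$ in $\rho(T)$, each $(T-\lambda_j)^{-1}$ commutes with $T$, hence preserves every $R(T^k)$ and so $R^\infty(T)$, whence $1/\lambda_j$ lies in the resolvent set of $S$ — which is exactly what permits the recursive selection of the $x_n$, controlling the basis constant, the codimension of the span, and the growth of the backward iterates so that $\sum|\alpha_n|$ is small. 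These quantitative estimates, obtained by modifying the constructions of \cite{PT} and \cite{T}, are the technical heart; the finite-ascent hypothesis has already been spent, in the reductions, on removing the kernel.
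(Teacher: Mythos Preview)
Your reduction has a concrete error. You assert that $\bigcap_k\overline{R(T^k)}$ is a finite-codimensional $T$-invariant subspace, but this fails already for the weighted forward shift $Te_n=w_ne_{n+1}$ on $\ell^2$ with $w_n\downarrow 0$: here $T$ is injective (so $\asc T=0$), quasinilpotent (so $0\in\si_e(T)\cap\partial\si(T)$), $R(T)$ is non-closed with $\codim\overline{R(T)}=1$ --- precisely your ``remaining case'' --- yet $\overline{R(T^k)}=\bigvee_{n>k}e_n$ and hence $\bigcap_k\overline{R(T^k)}=\{0\}$. The hypothesis $\asc T<\infty$ bounds $\dim N(T^k)$, not $\codim\overline{R(T^k)}$; stabilisation of the latter is essentially what $\asc T^*<\infty$ would give, and that is the hypothesis of the dual proposition, not this one. (The parenthetical ``one checks that \dots\ a suitable rank-one perturbation of the reduced operator lifts back to one of $T$'' is also unsupported: lifting a rank-one operator through a quotient map generically raises the rank.)

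Even past the reductions, the construction you call ``the technical heart'' --- a backward chain $Tx_n=\alpha_nx_{n-1}$ that is basic, spans a subspace of infinite codimension, and has $\sum|\alpha_n|$ arbitrarily small --- is only promised, not carried out, and it is not what the cited arguments actually produce. The paper's proof avoids both difficulties by a different mechanism: it keeps $E:=N(T^{\asc T})$ (finite-dimensional by hypothesis), writes $X=E\oplus M$, uses $\|(T-\la_n)^{-1}\|_e\to\infty$ and Banach--Steinhaus to find a single unit vector $u$ with $\|P_M(T-\la_n)^{-1}u\|\to\infty$, and sets $x_n:=P_M(T-\la_n)^{-1}u/\|P_M(T-\la_n)^{-1}u\|$. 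One then has $Tx_n=\la_nx_n+c_nu+e_n$ with $c_n\to0$ and $e_n\in E$; no exact backward chain is needed. Kadets--Pe\l czy\'nski yields a basic subsequence because any weak cluster point is forced into $E\cap M=\{0\}$, the rank-one perturbation $F=u\otimes\bigl(\sum_n c_{2n}\,y_{2n}^*\bigr)$ is tailored to cancel the $u$-component, and the invariant half-space is $\bigl(\bigvee_n x_{2n}\bigr)\vee E$. Thus the finite ascent is spent on making $E$ finite-dimensional so that this weak-cluster-point argument works --- not on quotienting the kernel away.
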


\begin{proof}
If $\dim N(T)=\infty$ then any half-subspace of $N(T)$ is invariant for $T$, and so $T$ has property (P).

So we may assume that $\dim N(T)<\infty$. Let $\e>0$. Let $k=\asc T<\infty$ and $E=N(T^k)$. Then $E$ is a finite-dimensional subspace of $X$, $\dim E\le k\cdot\dim N(T)$. Let $M\subset X$ be a complement of $E$, $X=E\oplus M$. Let $P_M$ be the projection onto $M$ along $E$. Then $P_E:=I-P_M$ is the projection onto $E$ along $M$.

Find a sequence $(\la_n)\subset \CC\setminus\si(T)$ such that $\la_n\to 0$. Since $0\in\si_e(T)$, we have $\lim_{n\to\infty}\|(T-\la_n)^{-1}\|_e=\infty$. Consequently,
$$
\lim_{n\to\infty}\|P_M(T-\la_n)^{-1}\|=\infty.
$$
By the Banach-Steinhaus uniform boundedness theorem, there exists a vector $u\in X$, $\|u\|=1$ such that
$$
\sup\bigl\{\|P_M(T-\la_n)^{-1}u\|:n\in\NN\bigr\}=\infty.
$$
Without loss of generality we may assume that $\lim_{n\to\infty}\bigl\|P_M(T-\la_n)^{-1}u\bigr\|=\infty$.
For $n\in\NN$ set 
$$
x_n=\frac{P_M(T-\la_n)^{-1}u}{\|P_M(T-\la_n)^{-1}u\|}.
$$
We have $\|x_n\|=1$ and 
\begin{align}\label{L1}
Tx_n&=&
\la_n x_n+(T-\la_n)x_n=
\la_nx_n+\frac{(T-\la_n)(I-P_E)(T-\la_n)^{-1}u}{\|P_M(T-\la_n)^{-1}u\|}\cr
&=&
\la_nx_n+\frac{u}{\|P_M(T-\la_n)^{-1}u\|}-\frac{(T-\la_n)P_E(T-\la_n)^{-1}u}{\|P_M(T-\la_n)^{-1}u\|},
\end{align}
where $\la_nx_n+\frac{u}{\|P_M(T-\la_n)^{-1}u\|}\to 0$ as $n\to\infty$ and
$$
\frac{(T-\la_n)P_E(T-\la_n)^{-1}u}{\|P_M(T-\la_n)^{-1}u\|}\in E
$$
since $TE\subset E$.

We show that the sequence $(x_n)_{n=1}^\infty$ has a basic subsequence. Suppose the contrary.
By Theorem \ref{crit1}, $\{x_n:n\in\NN\}^{-w}$ is weakly compact and does not contain $0$. By the Eberlein-Smulian theorem, there exists a weakly convergent subsequence $(x_k)$ of $(x_n)$, $x_k\toind{w} x$ and $x\ne 0$.
Then
$Tx_k\toind{w}Tx$ and $Tx\in E=N(T^k)$ 
by (\ref{L1}). So $x\in N(T^{k+1})=N(T^k)=E$. By definition, $x_k\in M$ for all $k$, and so $x\in M$. Hence $x=0$, a contradiction.

So the set $\{x_n: n\in\NN\}$ contains a basic sequence. By passing to a subsesquence if necessary we may assume that $(x_n)_{n=1}^\infty$ is a basic sequence in $M$. Let $(x^*_n)_{n=1}^\infty\subset M^*$ be the corresponding biorthogonal sequence,
$\langle x_n,x^*_j\rangle=\delta_{n,j}$ for all $n,j\in\NN$.

Set $y_n^*=x^*_nP_M\in X^*\quad(n\in\NN)$. Then $y^*_n\in E^\perp$, $\langle x_n,y^*_j\rangle=\delta_{n,j}$ for all $n,j\in\NN$ and $c:=\sup\{\|y^*_j\|:j\in\NN\}<\infty$.
Without loss of generality we may assume that $\sum_{n=1}^\infty \|P_M(T-\la_{2n})^{-1}u\|^{-1}<\e/c$.

Set $F= u\otimes \Bigl(\sum_{n=1}^\infty \frac{y_{2n}^*}{\|P_M(T-\la_{2n})u\|}\Bigr)$. Then $F$ is an operator of rank one  and 
$$
\|F\|=\|u\|\cdot\Bigl\|\sum_{n=1}^\infty \frac{y_{2n}^*}{\|P_M(T-\la_{2n})u\|}\Bigr\|\le
\sum_{n=1}^\infty \frac{c}{\|P_M(T-\la_{2n})^{-1}u\|}<\e.
$$
Let $L=\Bigl(\bigvee_{n=1}^\infty x_{2n}\Bigr)\vee E$. 
Clearly $\dim L\ge \dim \bigvee_{n=1}^\infty x_{2n}=\infty$. Furthermore, $\codim \bigvee_{n=1}^\infty x_{2n}=\infty$ and since $\dim E<\infty$, we have $\codim L=\infty$.
Hence $L$ is a half-space and $(T-F)E=TE\subset E\subset L$.
Furthermore, for $n\in\NN$ we have
$$
(T-F)x_{2n}=
\la_{2n}x_{2n}+\frac{u}{\|P_M(T-\la_{2n})^{-1}u\|}-\frac{(T-\la_{2n})P_E(T-\la_{2n})^{-1}u}{\|P_M(T-\la_{2n})^{-1}u\|}
$$
$$
-\frac{u}{\|P_M(T-\la_{2n})^{-1}u\|}
$$
$$
=
\la_{2n}x_{2n}-\frac{(T-\la_{2n})P_E(T-\la_{2n})^{-1}u}{\|P_M(T-\la_{2n})^{-1}x_{2n}\|}\in L.
$$

\end{proof}

The dual result is also true.

\begin{proposition}\label{P2}
Let $X$ be a separable infinite-dimensional Banach space, let $T\in B(X)$, $0\in\si_e(T)\cap\partial\si(T)$ and $\asc T^*<\infty$. Then $T$ has property (P).
\end{proposition}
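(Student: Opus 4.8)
The plan is to transport the proof of Proposition~\ref{P1} to the dual space $X^*$, with the weak${}^*$ topology replacing the weak topology and Theorem~\ref{crit2} replacing Theorem~\ref{crit1}; the point requiring extra care is that every subspace built inside $X^*$ must be weak${}^*$-closed, so that its preannihilator is an invariant half-space of $T-F$ in $X$. First dispose of the degenerate case: if $\dim N(T^*)=\infty$, then, since $N(T^*)=R(T)^\perp$, the closed $T$-invariant subspace $\overline{R(T)}={}^\perp N(T^*)$ has infinite codimension; if it is also infinite-dimensional it is already an invariant half-space, and otherwise $T$ has finite rank, so $N(T)$ is an infinite-dimensional subspace of finite codimension and any half-space of $N(T)$ works (with $F=0$ in both cases). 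So assume $\dim N(T^*)<\infty$; then $k:=\asc T^*<\infty$ and $E^*:=N((T^*)^k)$ is finite-dimensional, weak${}^*$-closed and $T^*$-invariant. Put $E_\perp:={}^\perp E^*$, a closed $T$-invariant subspace of $X$ of finite codimension with $(E_\perp)^\perp=E^*$; fix a complement $G$ of $E_\perp$ in $X$ and let $R\in B(X)$ be the projection onto $E_\perp$ along $G$. Then $X^*=E^*\oplus G^\perp$ with $G^\perp$ weak${}^*$-closed, and $R^*$ is the weak${}^*$-continuous projection of $X^*$ onto $G^\perp$ along $E^*$. Here $R^*$ will play the role of $P_M$, and $E^*$ that of $E$, in Proposition~\ref{P1}.

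Choose $\la_n\in\CC\setminus\si(T)$ with $\la_n\to0$. Since $0\in\si_e(T)$, $\|(T-\la_n)^{-1}\|_e\to\infty$, hence $\|(T-\la_n)^{-1}R\|\to\infty$ (because $(T-\la_n)^{-1}(I-R)$ has finite rank), so $\|R^*(T^*-\la_n)^{-1}\|=\|(T-\la_n)^{-1}R\|\to\infty$. By the uniform boundedness theorem there is $u^*\in X^*$, $\|u^*\|=1$, with $d_n:=\|R^*(T^*-\la_n)^{-1}u^*\|\to\infty$ after passing to a subsequence. Set $x_n^*:=d_n^{-1}R^*(T^*-\la_n)^{-1}u^*\in G^\perp$; then $\|x_n^*\|=1$ and, exactly as in~(\ref{L1}),
$$
T^*x_n^*=\la_n x_n^*+\frac{u^*}{d_n}-\frac{(T^*-\la_n)(I-R^*)(T^*-\la_n)^{-1}u^*}{d_n},
$$
where $\la_n x_n^*+d_n^{-1}u^*\to0$ in norm and the last term lies in $E^*$. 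I claim $0$ is a weak${}^*$-cluster point of $(x_n^*)$: since $X$ is separable the ball of $X^*$ is weak${}^*$-sequentially compact, so $x_k^*\toind{w^*}x^*$ along a subsequence; applying the weak${}^*$-continuous operator $T^*$ to the displayed identity, in which the first summand tends to $0$ and the third stays in the closed subspace $E^*$, gives $T^*x^*\in E^*$, hence $x^*\in N((T^*)^{k+1})=E^*$; but $x^*\in G^\perp$ as well (that subspace is weak${}^*$-closed), and $E^*\cap G^\perp=\{0\}$, so $x^*=0$. Now Theorem~\ref{crit2} applied to $(x_n^*)$ yields a basic subsequence $(y_k^*)$ and a bounded sequence $(y_k)$ in $X$ with $\langle y_k,y_j^*\rangle=\delta_{k,j}$. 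Since $y_k^*\in G^\perp=R^*X^*$, we have $\langle Ry_j,y_k^*\rangle=\langle y_j,R^*y_k^*\rangle=\langle y_j,y_k^*\rangle$, so after replacing $y_k$ by $Ry_k$ we may assume $y_k\in E_\perp$ for all $k$.

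Relabelling, $T^*y_k^*=\mu_k y_k^*+d_k^{-1}u^*-f_k^*$ with $\mu_k\to0$, $d_k\to\infty$, $f_k^*\in E^*$; passing to a further subsequence we may also assume $\sum_j d_{2j}^{-1}\|y_{2j}\|<\e$. Put $v:=\sum_j d_{2j}^{-1}y_{2j}\in E_\perp$ and let $F$ be the operator of rank at most one given by $Fx=\langle x,u^*\rangle v$, so $\|F\|=\|v\|<\e$ and $F^*\varphi=\langle v,\varphi\rangle u^*$. As $v\in E_\perp={}^\perp E^*$, $F^*$ vanishes on $E^*$, while biorthogonality gives $F^*y_{2m}^*=d_{2m}^{-1}u^*$; hence $(T^*-F^*)y_{2m}^*=\mu_{2m}y_{2m}^*-f_{2m}^*$ and $(T^*-F^*)E^*=T^*E^*\subseteq E^*$. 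Let $N$ be the weak${}^*$-closed linear span of $\{y_{2m}^*:m\in\NN\}\cup E^*$. Then $N$ is weak${}^*$-closed and contains each $(T^*-F^*)y_{2m}^*$ and $(T^*-F^*)E^*$; since $T^*-F^*=(T-F)^*$ is weak${}^*$-continuous, $(T^*-F^*)N\subseteq N$. Consequently $H:={}^\perp N$ is a closed $(T-F)$-invariant subspace of $X$ (for $x\in H$ and $\varphi\in N$, $\langle(T-F)x,\varphi\rangle=\langle x,(T^*-F^*)\varphi\rangle=0$). It has infinite codimension, because $H^\perp=N$ is infinite-dimensional; and it has infinite dimension, because every $y_{2m-1}$ lies in $H$ (it annihilates each $y_{2i}^*$ and, being in $E_\perp$, each element of $E^*$) and the $y_{2m-1}$ are linearly independent. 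Thus $H$ is an invariant half-space and $T$ has property (P).

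The step I expect to be the main obstacle is reconciling two competing demands: $N$ must be weak${}^*$-closed (so that ${}^\perp N$ has infinite codimension), yet $H={}^\perp N$ must also have infinite \emph{dimension} rather than merely infinite codimension. Both are secured by the single device of pushing the biorthogonal vectors $y_k$ into $E_\perp$ by applying $R$, which forces the odd-indexed $y_{2m-1}$ to lie in $H$; the remainder is the bookkeeping of Proposition~\ref{P1} carried out in $X^*$.
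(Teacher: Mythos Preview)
Your proof is correct and follows essentially the same route as the paper's: your $E^*$, $E_\perp$, $G$, $R$, $R^*$ are exactly the paper's $E'$, $\overline{R(T^k)}$, $E$, $Q$, $P_{M'}$, and the construction of $u^*$, the normalized vectors $x_n^*$, the $w^*$-cluster-point argument, the appeal to Theorem~\ref{crit2}, the projection of the biorthogonal vectors into ${}^\perp E^*$, and the rank-one $F$ all match. The only cosmetic difference is that you take $N$ to be the weak${}^*$-closed span (rather than the norm-closed span $L'$) before preannihilating, which makes the verification that $H^\perp$ is infinite-dimensional slightly cleaner but changes nothing, since ${}^\perp N={}^\perp L'$.
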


\begin{proof}
If $\dim N(T^*)=\infty$ then $\codim \overline{R(T)}=\infty$. If $\dim R(T)=\infty$ then $\overline{R(T)}$ is a half-space invariant for $T$. If $\dim R(T)<\infty$ then $\dim N(T)=\infty$ and any half-subspace of $N(T)$ is invariant for $T$. So $T$ has property (P).

So we may assume that $\dim N(T^*)<\infty$. Let $\e>0$. Let $k=\asc T^*<\infty$. Let $E'=N(T^{*k})$. Then $E'$ is a finite-dimensional subspace of $X^*$.

We have $\overline{R(T^k)}={}^\perp E'$. So $\codim \overline{R(T^{k})}<\infty$. Let $E\subset X$ be a complement of $\overline{R(T^k)}$, $X=E\oplus \overline{R(T^k)}$. Let $M'=E^\perp$. Then $M'$ is a $w^*$-closed subspace of $X^*$ and $X^*=M'\oplus E'$. Let $P_{M'}$ be the projection onto $M'$ along $E'$. Then $P_{E'}:=I-P_{M'}$ is the projection onto $E'$ along $M'$.

Find a sequence $(\la_n)\subset \CC\setminus\si(T)=\CC\setminus\si(T^*)$ such that $\la_n\to 0$. Since $0\in\si_e(T)=\si_e(T^*)$, we have $\lim_{n\to\infty}\|(T^*-\la_n)^{-1}\|_e=\infty$. Consequently,
$$
\lim_{n\to\infty}\|P_{M'}(T^*-\la_n)^{-1}\|=\infty.
$$
By the Banach-Steinhaus uniform boundedness theorem, there exists a vector $u^*\in X^*$, $\|u^*\|=1$ such that
$$
\sup\bigl\{\|P_{M'}(T^*-\la_n)^{-1}u^*\|: n\in\NN\bigr\}=\infty.
$$
Without loss of generality we may assume that $\lim_{n\to\infty}\bigl\|P_{M'}(T^*-\la_n)^{-1}u^*\bigr\|=\infty$.
For $n\in\NN$ set 
$$
y^*_n=\frac{P_{M'}(T^*-\la_n)^{-1}u^*}{\|P_{M'}(T^*-\la_n)^{-1}u^*\|}.
$$
We have $\|y^*_n\|=1$ and 
$$
T^*y^*_n=
\la_n y^*_n+(T^*-\la_n)y^*_n=
\la_ny^*_n+\frac{(T^*-\la_n)(I-P_{E'})(T^*-\la_n)^{-1}u^*}{\|P_{M'}(T^*-\la_n)^{-1}u^*\|}
$$
$$
=
\la_ny^*_n+\frac{u^*}{\|P_{M'}(T^*-\la_n)^{-1}u^*\|}-\frac{(T^*-\la_n)P_{E'}(T^*-\la_n)^{-1}u^*}{\|P_{M'}(T^*-\la_n)^{-1}u^*\|},
$$
where $\la_ny^*_n+\frac{u^*}{\|P_{M'}(T^*-\la_n)^{-1}u^*\|}\to 0$ as $n\to\infty$ and
$$
\frac{(T^*-\la_n)P_{E'}(T^*-\la_n)^{-1}u^*}{\|P_{M'}(T^*-\la_n)^{-1}u^*\|}\in E'
$$
since $T^*E'\subset E'$.

Since $X$ is a separable Banach space, the closed unit ball in $X^*$ with the $w^*$-topology is metrizable and compact. So $(y^*_n)$ has a $w^*$-convergent subsequence. Without loss of generality we may assume that $y^*_n\toind{w^*} y^*$. So $T^*y^*_n\toind{w^*} T^*y^*\in E'=N(T^{*k})$. Hence $y^*\in N(T^{*k+1})=N(T^{*k})=E'$. However, clearly $y^*\in M'$, and so $y^*=0$.
By Theorem \ref{crit2}, $(y^*_n)_{n=1}^\infty$ contains a basic subsequence. Without loss of generality we may assume that $(y^*_n)_{n=1}^\infty$ is basic. Let $(y_n)_{n=1}^\infty\subset X$ be a bounded sequence satisfying $\langle y_n,y^*_j\rangle=\delta_{n,j}\quad(n,j\in\NN)$.

Let $Q:X=E\oplus\overline{R(T^k)}\to\overline{R(T^k)}$ be the canonical projection onto $\overline{R(T^k)}$ along $E$. Let $x_n=Qy_n\quad(n\in\NN)$. Then $x_n\in \overline{R(T^k)}$ for all $n$, $\langle x_n,y^*_j\rangle=\delta_{n,j}\quad(n,j\in\NN)$ and $c:=\sup\{\|x_n\|:n\in\NN\}<\infty$.


By passing to a subsequence if necessary we can assume that
$$
\sum_{n=1}^\infty\frac{1}{\|P_{M'}(T^*-\la_n)^{-1}u^*\|}<\frac{\e}{c}.
$$

Define operator $F= \Bigl(\sum_{n=1}^\infty \frac{x_{2n}}{\|P_{M'}(T^*-\la_{2n})u^*\|}\Bigr) \otimes u^*$. Then $F$ is an operator of rank one and 
$$
\|F\|=\|u^*\|\cdot\Bigl\|\sum_{n=1}^\infty \frac{x_{2n}}{\|P_{M'}(T^*-\la_{2n})u^*\|}\Bigr\|\le
\sum_{n=1}^\infty \frac{c}{\|P_{M'}(T^*-\la_{2n})^{-1}u^*\|}<\e.
$$
Let 
$$
L'=\Bigl(\bigvee_{n=1}^\infty y^*_{2n}\Bigr) \vee E'\subset X^*
$$
and 
$$
L={}^\perp L'={}^\perp\Bigl(\bigvee_{n=1}^\infty y^*_{2n}\Bigr) \cap {}^\perp E'.
$$
Clearly ${}^\perp\bigl(\bigvee_{n=1}^\infty y^*_{2n}\bigr)\supset \{x_{2j+1}:j\in\NN\}$. So $\dim {}^\perp\bigl(\bigvee_{n=1}^\infty y^*_{2n}\bigr)=\infty$. Moreover, $\codim {}^\perp E'<\infty$, and so $\dim L=\infty$.

Similarly, $x_{2j}\notin {}^\perp\bigl(\bigvee_{n=1}^\infty y^*_{2n}\bigr)\supset L$. So $\codim L=\infty$ and $L$ is a half-space.

We show that $(T-F)L\subset L$.
Let $z\in L={}^\perp\bigl(\bigvee_{n=1}^\infty y^*_{2n}\bigr)\cap {}^\perp E'.$ To show that $(T-F)z\in L$ we must show that
$$
\langle (T-F)z,y^*_{2j}\rangle=0
$$
for all $j\in\NN$ and
$$
\langle (T-F)z,y^*\rangle=0
$$
for all $y^*\in E'$.

We have
$$
\langle (T-F)z,y^*_{2n}\rangle=
\langle z,T^*y^*_{2n}\rangle-\langle Fz,y^*_{2n}\rangle
$$
$$
=
\Bigl\langle z,\la_{2n}y^*_{2n}+\frac{u^*}{\|P_{M'}(T^*-\la_{2n})^{-1}u^*\|}-\frac{(T^*-\la_{2n})P_{E'}(T^*-\la_{2n})^{-1}u^*}{\|P_{M'}(T^*-\la_{2n})^{-1}u^*\|}\Bigr\rangle
$$
$$
-\frac{\langle z,u^*\rangle}{\|P_{M'}(T^*-\la_{2n})^{-1}u^*\|}=0.
$$
Furthermore, for $y^*\in E'$ we have 
$$
\langle (T-F)z,y^*\rangle=
\langle z,T^*y^*\rangle-\langle z,u^*\rangle\cdot \Bigl\langle \sum_{n=1}^\infty\frac{x_{2n}}{\|P_{M'}(T^*-\la_{2n})^{-1}u^*\|},y^*\Bigr\rangle=0
$$
since $T^*y^*\in E'$, $z\in{}^\perp E'$ and $x_{2n}\in {}^\perp E'$ for all $n\in\NN$.
Hence $(T-F)L\subset L$, and so $T$ has property (P).
\end{proof}

\begin{lemma}\label{P3}
Let $T\in B(X)$, $\dim N(T)<\infty$. Then $TR^\infty(T)=R^\infty(T)$.
\end{lemma}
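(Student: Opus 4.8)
The plan is to prove the two inclusions separately. The inclusion $TR^\infty(T)\subseteq R^\infty(T)$ is immediate and uses no hypothesis: if $y\in R^\infty(T)$ then $y\in R(T^k)$ for every $k$, hence $Ty\in R(T^{k+1})\subseteq R(T^k)$ for every $k$, so $Ty\in R^\infty(T)$. The finite-dimensionality of $N(T)$ will be needed only for the reverse inclusion $R^\infty(T)\subseteq TR^\infty(T)$.

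So fix $y\in R^\infty(T)$; I want to produce a single $x\in R^\infty(T)$ with $Tx=y$. For each $k$, since $y\in R(T^{k+1})$ there is $z_k$ with $T^{k+1}z_k=y$; put $x_k=T^kz_k$, so that $Tx_k=y$ and $x_k\in R(T^k)$. Set $A_k=\{x\in R(T^k):Tx=y\}$. Then $A_k$ is nonempty (it contains $x_k$), $A_1\supseteq A_2\supseteq\cdots$, and a short check shows $A_k=x_k+\bigl(N(T)\cap R(T^k)\bigr)$; that is, $A_k$ is a coset of the finite-dimensional subspace $W_k:=N(T)\cap R(T^k)$ of $N(T)$.

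Now $W_1\supseteq W_2\supseteq\cdots$ all sit inside the finite-dimensional space $N(T)$, so the sequence $\dim W_k$ is eventually constant, and therefore $W_k=W_m$ for all $k\ge m$, for some $m$. For $k\ge m$ the sets $A_k$ are then nested nonempty cosets of one and the same subspace $W_m$, which forces $A_k=A_m$ for all $k\ge m$. Consequently $\bigcap_{k=1}^\infty A_k=A_m\ne\emptyset$; picking any $x_0$ in this intersection, we have $x_0\in R(T^k)$ for all $k$, i.e. $x_0\in R^\infty(T)$, and $Tx_0=y$. Hence $y\in TR^\infty(T)$, which finishes the proof.

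The only genuinely delicate point is the stabilization step: in an infinite-dimensional space a nested sequence of nonempty affine subspaces can have empty intersection, and what saves us is that, once $W_k$ stabilizes, all the $A_k$ with $k\ge m$ are translates of the \emph{same} subspace, so being nested they must in fact coincide. I expect the bookkeeping identity $A_k=x_k+\bigl(N(T)\cap R(T^k)\bigr)$ together with the observation that $\dim N(T)<\infty$ forces $W_k$ to stabilize to be the crux; the rest is routine.
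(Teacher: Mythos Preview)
Your proof is correct. Both your argument and the paper's rest on the same core observation---that any two preimages of the fixed vector differ by an element of $N(T)$, and finite-dimensionality of $N(T)$ then forces a stabilization---but the organization is different. The paper proceeds by contradiction: assuming $x\notin TR^\infty(T)$, it selects preimages $y_0,\dots,y_{n+1}$ (with $n=\dim N(T)$) lying in successively deeper ranges $R(T^{k_j})$ yet escaping $R(T^{k_{j+1}})$, and then checks by hand that the differences $u_j=y_j-y_0\in N(T)$ are linearly independent, contradicting $\dim N(T)=n$. Your route is direct: you recognize each solution set $A_k$ as a coset of $W_k=N(T)\cap R(T^k)$, let the chain $W_k$ stabilize inside the finite-dimensional $N(T)$, and invoke the fact that nested nonempty cosets of the \emph{same} subspace must coincide. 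Your version avoids the linear-independence bookkeeping and produces the preimage constructively; the paper's version is a bit more hands-on with explicit indices but reaches the same conclusion.
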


\begin{proof}
Clearly $TR^\infty(T)\subset R^\infty(T)$.

Let $x\in R^\infty(T)$. Suppose on the contrary that $x\notin TR^\infty(T)$.

Let $n=\dim N(T)$. Set $k_0=0$. Since $x\in R^\infty(T)\subset R(T)$, there exists $y_0\in X$ such that $Ty_0=x$. Since $x\notin TR^\infty(T)$, there exists $k_1\in\NN$ such that $y_0\notin R(T^{k_1})$.

Since $x\in R^\infty(T)\subset R(T^{k_1+1})$, there exists $y_1\in R(T^{k_1})$ with $Ty_1=x$. Since $x\notin TR^\infty(T)$, there exists $k_2>k_1$ such that $y_1\notin R(T^{k_2})$. 

Inductively we can find vectors $y_1,\dots,y_n,y_{n+1}$ and numbers $k_1<k_2<\dots k_{n+1},k_{n+2}$ such that
$Ty_j=x$ and $y_j\in R(T^{k_j})\setminus R(T^{k_{j+1}})$ for $j=1,\dots, n+1$.

Set $u_j=y_j-y_0\quad(j=1,\dots,n+1)$. Clearly $Tu_j=0$ for all $j=1,\dots,n+1$. We show that the vectors $u_1,\dots,u_{n+1}$ are linearly independent. Suppose that $\sum_{j=1}^{n+1}\alpha_ju_j=0$ for some coefficients $\alpha_j$. 
We have
$$
0=\sum_{j=1}^{n+1}\alpha_jy_j-y_0\sum_{j=1}^{n+1}\alpha_j,
$$
where $\sum_{j=1}^{n+1}\alpha_jy_j\in R(T^{k_1})$ and $y_0\notin R(T^{k_1})$. So $\sum_{j=1}^{n+1}\alpha_j=0$.

Let $j_0$ be the smallest index such that $\alpha_{j_0}\ne 0$. Then
$$
0=\sum_{j=j_0}^{n+1}\alpha_jy_j\in \alpha_{j_0}y_{j_0}+R(T^{k_{j_0+1}}).
$$
Since $y_{j_0}\notin R(T^{k_{j_0+1}})$, we have $\alpha_{j_0}=0$. So $\alpha_j=0$ for all $j$ and the vectors $u_1,\dots,u_{n+1}$ are linearly independent elements in $N(T)$, a contradiction with the assumption that $\dim N(T)=n$.

Hence $x\in TR^\infty(T)$.
\end{proof}

\begin{theorem}
Let $X$ be an infinite-dimensional Banach space, let $T\in B(X)$ and $\e>0$. Then there exists an operator $F\in B(X)$ of rank at most one such that $\|F\|<\e$ and $T-F$ has an invariant half-space.
\end{theorem}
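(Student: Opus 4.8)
The plan is to reduce the statement, in two preliminary steps, to situations covered by Propositions~\ref{P1} and~\ref{P2}, or to ones in which an invariant half-space is visible directly, so that one may take $F=0$. First I would reduce to the case that $X$ is separable: starting from the closed cyclic subspace generated by a nonzero vector and adjoining at most countably many further closed cyclic subspaces (one more whenever the span built so far is still finite-dimensional) one obtains a closed, $T$-invariant, separable, infinite-dimensional subspace $Y\subseteq X$; if $X$ is non-separable then $\codim Y=\infty$ automatically, and then $Y$ is an invariant half-space, so $F=0$ works. Next, since the Calkin algebra $B(X)/\cK(X)$ is non-trivial, $\si_e(T)\ne\emptyset$; pick $\mu_0\in\si_e(T)$ with $|\mu_0|=r:=\max\{|\mu|:\mu\in\si_e(T)\}$. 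For $|\la|>r$ we have $\la\notin\si_e(T)$, so $T-\la$ is Fredholm; it is invertible for large $|\la|$, and the index being locally constant on the connected set $\{|\la|>r\}$, it has index $0$ there. By the analytic Fredholm alternative $\si(T)\cap\{|\la|>r\}$ is discrete, hence has empty interior, so $\mu_0$ is a limit of points of $\{|\la|>r\}\setminus\si(T)$; thus $\mu_0\in\partial\si(T)$. Since a subspace is invariant for an operator exactly when it is invariant for that operator plus a scalar, replacing $T$ by $T-\mu_0$ does not change the family of invariant subspaces of any perturbation $T-\mu_0-F$, so it is enough to prove property (P) for a $T$ on a separable infinite-dimensional $X$ with $0\in\si_e(T)\cap\partial\si(T)$.

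Assume this. If $\dim N(T)=\infty$, then $N(T)$ is an infinite-dimensional closed subspace, hence contains a half-space $L$; as $TL=\{0\}\subseteq L$, the choice $F=0$ works. If $\dim N(T^*)=\infty$, then $\codim\overline{R(T)}=\infty$, while $\dim N(T)<\infty$ forces $\dim R(T)=\dim(X/N(T))=\infty$, so $\overline{R(T)}$ is a $T$-invariant half-space and again $F=0$ works. Hence we may assume $\dim N(T)<\infty$ and $\dim N(T^*)<\infty$. If $\asc T<\infty$, Proposition~\ref{P1} yields property (P); if $\asc T^*<\infty$, Proposition~\ref{P2} does.

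The remaining case, $\asc T=\infty=\asc T^*$ with $\dim N(T)<\infty$ and $\dim N(T^*)<\infty$, is the one I expect to require real work, and it is settled using Lemma~\ref{P3}. Since $\dim N(T)<\infty$, the decreasing chain of finite-dimensional spaces $N(T)\cap R(T^k)$ is eventually constant, say for $k\ge k_0$, and its eventual value is $W:=N(T)\cap R^\infty(T)$. If $W=\{0\}$, then, using $\asc T=\infty$, pick $v\in N(T^{k_0+1})\setminus N(T^{k_0})$; then $0\ne T^{k_0}v\in N(T)\cap R(T^{k_0})=W$, a contradiction. So $W\ne\{0\}$; fix $0\ne w\in W$. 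By Lemma~\ref{P3}, $TR^\infty(T)=R^\infty(T)$, so we may choose $w=w_0,w_1,w_2,\dots\in R^\infty(T)$ with $Tw_{j+1}=w_j$ for all $j$. Then $T^jw_j=w\ne0$ and $T^{j+1}w_j=0$, which forces the $w_j$ to be linearly independent, so $\dim R^\infty(T)=\infty$ and $\overline{R^\infty(T)}$ is infinite-dimensional. On the other hand, $\asc T^*=\infty$ and $\dim N(T^*)<\infty$ together make the numbers $\codim\overline{R(T^k)}=\dim N(T^{*k})$ finite and strictly increasing, hence unbounded; since $\overline{R^\infty(T)}\subseteq\bigcap_k\overline{R(T^k)}$, it follows that $\codim\overline{R^\infty(T)}=\infty$. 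Therefore $\overline{R^\infty(T)}$ is a $T$-invariant half-space, and $F=0$ works once more.

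Collecting the cases proves property (P) in the separable case; undoing the translation by $\mu_0$ (harmless, as noted) and the reduction to separable spaces then gives the theorem in full generality. The two steps I anticipate being the crux are the production of a point of $\si_e(T)$ lying on $\partial\si(T)$, for which the analytic Fredholm alternative above is the key input, and the implication — valid because $\dim N(T)<\infty$ — from $\asc T=\infty$ to $\dim R^\infty(T)=\infty$, which is precisely where Lemma~\ref{P3} enters.
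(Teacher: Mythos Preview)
Your argument is correct and follows the paper's proof closely: the same reduction to $0\in\si_e(T)\cap\partial\si(T)$ on a separable space, the same appeals to Propositions~\ref{P1} and~\ref{P2}, and the same use of Lemma~\ref{P3} to produce an infinite independent chain inside $R^\infty(T)$. The only difference is in the final case, where you take $\overline{R^\infty(T)}$ as the invariant half-space and read off infinite codimension from $\dim N(T^{*k})\to\infty$, whereas the paper takes the smaller space $L=\bigvee_j y_j\subset\overline{R^\infty(T)}$ and verifies infinite codimension by building an explicit dual chain $(y_k^*)\subset L^\perp$ via the analogous construction for $T^*$; your version is a mild streamlining that avoids invoking the ``similarly'' for $T^*$.
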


\begin{proof}
Without loss of generality we may assume that $X$ is separable.

Let $\la\in\si_e(T)$ satisfy $|\la|=\max\{|\mu|:\mu\in\si_e(T)\}$. Then there are only countably many elements $\mu\in\si(T)$ satisfying $|\mu|>|\la|$. So there exists a sequence $(\la_n)\subset \CC\setminus\si(T)$ such that $\la_n\to \la$.

Replacing $T$ by $T-\la$ we may assume without loss of generality that $\la=0$.

So we may assume that $0\in\si_e(T)\cap\partial\si(T)$. 

By Proposition \ref{P1}, we may assume that $\asc T=\infty$. Clearly we may assume that $\dim N(T)<\infty$; otherwise any half-subspace of $N(T)$ is invariant for $T$.  We have
$$
N(T)\supset R(T)\cap N(T)\supset R(T^2)\cap N(T)\supset\cdots,
$$
where $R(T^j)\cap N(T)\ne\{0\}$ for all $j\in\NN$. So there exists $j_0\in \NN$ such that 
$$
R(T^j)\cap N(T)= R(T^{j_0})\cap N(T)
$$ for all $j\ge j_0$. Hence $R(T^{j_0})\cap N(T)= R^\infty(T)\cap N(T)$
and there exists a nonzero vector $y_0\in R^\infty(T)\cap N(T)$.

By Lemma \ref{P3}, we have $TR^\infty(T)=R^\infty(T)$. So we can find inductively vectors $y_j\in R^\infty(T)\quad(j\in\NN)$ such that $Ty_j=y_{j-1}\quad(j\ge 1)$ and $Ty_0=0$.

By Proposition \ref{P2}, we can assume that $\asc T^*=\infty$. Similarly we can find vectors $y^*_0,y^*_1,y^*_2,\dots\in X^*$ such that $y^*_0\ne 0$, $T^*y^*_j=y^*_{j-1}\quad(j\in\NN)$ and $T^*y^*_0=0$.

Let $L=\bigvee_{j=0}^\infty y_j$. Clearly $TL\subset L$. Vectors $y_j\quad(j\ge 0)$ are linearly independent. Indeed, suppose that $\sum_{j=0}^\infty\alpha_jy_j=0$ for some finite sum. Suppose that $\alpha_{j_0}\ne 0$ and $\alpha_j=0$ for all $j> j_0$. Then 
$$
0=T^{j_0}\sum_{j=0}^\infty\alpha_jy_j=\alpha_{j_0}y_{0}.
$$
So $\alpha_{j_0}=0$, a contradiction. So $\dim L=\infty$.

For $j,k=0,1,\dots$ we have
$$
\langle y_j,y^*_k\rangle=0
$$
since $y_j\in R^\infty(T)\subset R(T^{k+1})$ and $y^*_k\in N(T^{*k+1})$.
So $L^\perp\supset\bigvee\{y^*_k:k=0,1,\dots\}$ where the vectors $y^*_k$ are linearly independent as above. Hence $\dim L^\perp=\infty$ and so $\codim L=\infty$.

Hence $T$ has an invariant half-space.
\end{proof}

\end{document}